\newenvironment{eq}{\begin{equation}}{\end{equation}}
\newenvironment{proof}{{\bf Proof}:}{\vskip 5mm }
\newtheorem{proposition}{Proposition}[subsection]
\newtheorem{lemma}[proposition]{Lemma}
\newtheorem{definition}[proposition]{Definition}
\newtheorem{theorem}[proposition]{Theorem}
\newtheorem{remark}[proposition]{Remark}
\newtheorem{problem}[proposition]{Problem}
\newtheorem{construction}[proposition]{Construction}
\newcommand{\llabel}[1]{\label{#1}}
\newcommand{\comment}[1]{}
\newcommand{\sr}{\rightarrow}
\newcommand{\wt}{\widetilde}
\newcommand{\spc}{{\,\,\,\,\,\,\,}}
\newcommand{\wtOb}{{\wt{\mathcal Ob}}}
\newcommand{\Ob}{{\mathcal Ob}}
\newcommand{\wOb}{\wt{\Ob}}
\begin{document}
\parskip = 2mm
\begin{center}
{\bf\Large The {$(\Pi,\lambda)$}-structures on the {C-systems} defined by universe categories}\footnote{\em 2000 Mathematical Subject Classification: 
03F50, 
18C50  
03B15, 
18D15, 
}$^,$\footnote{For the published version of the paper see \url{http://www.tac.mta.ca/tac/volumes/32/4/32-04abs.html}}

\vspace{3mm}

{\large\bf Vladimir Voevodsky}\footnote{School of Mathematics, Institute for Advanced Study,
Princeton NJ, USA. e-mail: vladimir@ias.edu}
\vspace {3mm}

\end{center}
\begin{abstract}
We define the notion of a $(P,\wt{P})$-structure on a universe $p$ in a locally cartesian closed category category with a binary product structure and construct a $(\Pi,\lambda)$-structure on the C-systems $CC({\cal C},p)$ from a $(P,\wt{P})$-structure on $p$. 

We then define homomorphisms of C-systems with $(\Pi,\lambda)$-structures and functors of universe categories with $(P,\wt{P})$-structures and show that our construction is functorial relative to these definitions.
\end{abstract}

\tableofcontents


\subsection{Introduction}
\label{Sec.1}

The concept of a C-system in its present form was introduced in \cite{Csubsystems}. The type of the C-systems is constructively equivalent to the type of contextual categories defined by Cartmell in \cite{Cartmell0} and \cite{Cartmell1} but the definition of a C-system is slightly different from Cartmell's foundational definition.

In this paper, which extends the series started with \cite{Cfromauniverse}, \cite{fromunivwithPiI} and \cite{presheavesOb}, we continue to consider what might be the most important structure on C-systems - the structure that corresponds, for the syntactic C-systems, to the operations of dependent product,  $\lambda$-abstraction and application. The first C-system formulation of this structure was introduced by John Cartmell in \cite[pp. 3.37 and 3.41]{Cartmell0} as a part of what he called a strong M.L. structure. It was later studied by Thomas Streicher in \cite[p.71]{Streicher} who called a C-system (contextual category) together with such a structure a ``contextual category with products of families of types''. 

In \cite{fromunivwithPiI} we introduced an alternative formulation of this structure that we called a $(\Pi,\lambda)$-structure and constructed a bijection between the sets of Cartmell-Streicher structures and $(\Pi,\lambda)$-structures on any C-system $CC$. 

In this paper we consider the case of C-systems of the form $CC({\cal C},p)$ introduced in \cite{Cfromauniverse}. They are defined, in a functorial way, by a category $\cal C$ with a final object and a morphism $p:\wt{U}\sr U$  together with the choice of pullbacks of $p$ along all morphisms in $\cal C$. A morphism with such choices is called a universe in $\cal C$.  As a corollary of general functoriality we obtain a construction of isomorphisms that connect the C-systems $CC({\cal C},p)$ corresponding to different choices of pullbacks and different choices of final objects. It allows us to use the notation $CC({\cal C},p)$ that only mentions $\cal C$ and $p$. 

In \cite{presheavesOb} a number of results about presheaves on universe categories and on the C-systems $CC({\cal C},p)$ has been established. These results are of general nature and do not refer to the $(\Pi,\lambda)$-structures. However, they are highly useful for the constructions such as the one presented in this paper. 

The main result of the paper - Construction \ref{2015.03.17.constr3}, produces a $(\Pi,\lambda)$-structure on $CC({\cal C},p)$ from what we call a $(P,\wt{P})$-structure on $p$ and what is, in essence, two morphisms in $\cal C$ completing two other morphisms to a pullback. Its combination with the construction of \cite{fromunivwithPiI}, without the part that concerns the bijection, was originally stated in \cite[Proposition 2]{CMUtalk} with a sketch of a proof given in the 2009 version of \cite{NTS}.  It and the ideas that it is based on are among the most important ingredients of the construction of the univalent model of the Martin-Lof type theory in Kan simplicial sets. 

In view of Lemma \ref{2016.09.09.l1}, Construction \ref{2015.03.17.constr3} can be used not only to construct the $(\Pi,\lambda)$-structures on C-systems, but also to prove that such structures do not exist. It is possible, that a similar technique may be used with other systems of inference rules of type theory, for example, to show that for a given universe $p$ no model of a given kind of higher inductive types exists on $CC({\cal C},p)$. 

In the following section we define homomorphisms of C-systems with $(\Pi,\lambda)$-structures and functors of universe categories with $(P,\wt{P})$-structures and show, in Theorem \ref{2015.03.21.th1}, that our construction is functorial relative to these definitions.

Theorem \ref{2015.03.21.th1} is interesting also in that that its proof indirectly uses almost all results of \cite{presheavesOb}. On the other hand, modulo these results, the proof is very short and straightforward. 

The $(\Pi,\lambda)$-structures correspond to the $(\Pi,\lambda,app,\beta,\eta)$-system of inference rules. In \cite[Remark 4.4]{fromunivwithPiI} we outline the definitions of classes of structures that correspond to the similar systems but without the $\beta$- or $\eta$-rules. Such structures appear as natural variations of the $(\Pi,\lambda)$-structures. The results of the present paper admit straightforward modifications needed to construct and sometimes classify such partial $(\Pi,\lambda)$-structures on C-systems of the form $CC({\cal C},p)$.

One may wonder how the construction of this paper relates to the earlier ideas of Seely \cite{Seely1984} and their refinement by Clairambault and Dybjer \cite{ClairambaultDybjer2014}. This question requires further study. 

The methods of this paper are fully constructive and, in fact, almost entirely essentially algebraic. 

The paper is written in the same formalization-ready style as the previous ones. The main intended base for its formalization is Zermelo-Fraenkel theory. However, it can also be formalized in the existing formal systems for the univalent foundations such as the UniMath. 

Because of the importance of constructions for this paper we continue to use a special pair of names Problem-Construction for the specification of the goal of a construction and the description of the particular solution.

\comment{In the case of a Theorem-Proof pair one usually refers (by name or number) to the theorem when  using the proof of this theorem. This is acceptable in the case of theorems because the future use of their proofs is such that only the fact that there is a proof but not the particulars of the proof matter. 

In the case of a Problem-Construction pair the content of the construction often matters in the future use. Because of this we have to refer to the construction and not to the problem and we assign numbers both to Problems and to Constructions. }
  
We also continue to use the diagrammatic order of writing compositions of morphisms, i.e., for $f:X\sr Y$ and $g:Y\sr Z$ the composition of $f$ and $g$ is denoted by $f\circ g$. This rule applies to functions between sets, morphisms in categories, functors etc. 

For a functor $\Phi:{\cal C}\sr {\cal C}'$, we let $\Phi^{\circ}$ denote the functor $PreShv(C')\sr PreShv(C)$ given by the pre-composition with a functor $\Phi^{op}:{\cal C}^{op}\sr ({\cal C}')^{op}$, that is, 
$$\Phi^{\circ}(F)(X)=F(\Phi(X))$$
In the literature this functor is denoted both by $\Phi^*$ and $\Phi_*$ and we decided to use a new unambiguous notation instead. 

Acknowledgements are at the end of the paper.


\subsection{From $(P,\wt{P})$- to $(\Pi,\lambda)$-structures -- the construction}
\label{Sec.2}
In this section we describe a method of constructing $(\Pi,\lambda)$-structures on C-systems of the form $CC({\cal C},p)$ where $\cal C$ is a locally cartesian closed universe category $({\cal C},p)$ with a binary product structure. 

Let us recall the following definition from \cite{fromunivwithPiI}:
\begin{definition}
\llabel{2015.03.09.def1}
Let $CC$ be a C-system. A pre-$(\Pi,\lambda)$-structure on $CC$ is a pair of morphisms of presheaves 
$$\Pi:\Ob_2\sr \Ob_1$$
$$\lambda:\wtOb_2\sr \wtOb_1$$
such that the square
\begin{eq}
\llabel{2015.03.09.eq1}
\begin{CD}
\wtOb_2@>\lambda>> \wtOb_1\\
@V\partial VV @VV \partial V\\
\Ob_2 @>\Pi>> \Ob_1
\end{CD}
\end{eq}
commutes.  

A pre-$(\Pi,\lambda)$-structure is called a $(\Pi,\lambda)$-structure if the square (\ref{2015.03.09.eq1}) is a pullback.
\end{definition}
The functors $I_p$ used in the following definition are defined in \cite[Sec. 2.6]{presheavesOb}. 
\begin{definition}
\llabel{2015.03.29.def1}
Let $\cal C$ be a locally cartesian closed category with a binary product structure and $p:\wt{U}\sr U$ a universe in $\cal C$. A pre-$(P,\wt{P})$-structure on $p$ is a pair of morphisms 
$$\wt{P}:I_p(\wt{U}) \sr \wt{U}$$
$$P:I_p(U) \sr U$$
such that the square
\begin{eq}
\llabel{2009.prod.square}
\begin{CD}
I_p(\wt{U}) @>\wt{P}>> \wt{U}\\
@V I_p(p) VV @VV p V\\
I_p(U) @>P>> U
\end{CD}
\end{eq}
commutes.

A pre-$(P,\wt{P})$-structure is called a $(P,\wt{P})$-structure if the square (\ref{2009.prod.square}) is a pullback. 
\end{definition}
We will often say pre-$P$-structure (resp. $P$-structure) instead of pre-$(P,\wt{P})$-structure (resp. $(P,\wt{P})$-structure). 
\begin{problem}
\llabel{2015.03.17.prob0}
Let $({\cal C},p)$ be a locally cartesian closed universe category with a binary product structure. Let $(P,\wt{P})$ be a pre-$(P,\wt{P})$-structure on $p$. To construct a pre-$(\Pi,\lambda)$-structure on $CC({\cal C},p)$.
\end{problem}
\begin{construction}
\llabel{2015.03.17.constr3}\rm
Consider the diagram:
\begin{eq}
\llabel{2016.12.09.eq1}
\begin{CD}
\wt{\Ob}_2 @>\wt{\mu}_2>> int^{\circ}(Yo(I_p(\wt{U}))) @>int^{\circ}(Yo(\wt{P}))>> int^{\circ}(Yo(\wt{U})) @>\mu_1^{-1}>> \wOb_1\\
@V\partial VV @VV int^{\circ}(Yo(I_p(p))) V @VV int^{\circ}(Yo(p)) V @VV\partial V\\
\Ob_2 @>\mu_2>> int^{\circ}(Yo(I_p(U))) @>int^{\circ}(Yo(P))>> int^{\circ}(Yo(U)) @>\mu_1^{-1}>> \Ob_1
\end{CD}
\end{eq}
where $\mu_n$ and $\wt{\mu}_n$ are isomorphisms defined in \cite[Sec. 2.6]{presheavesOb}. The left hand side and the right hand side squares of this diagram commute because the squares in \cite[Problem 2.6.8]{presheavesOb} commute. The middle square commutes because the square (\ref{2009.prod.square}) commutes and both $Yo$ and $int^{\circ}$ are functors. Therefore, the outside rectangle commutes and we conclude that the pair of morphisms 
\begin{eq}
\llabel{2016.12.09.eq3}
\begin{CD}
\lambda=\wt{\mu}_2\circ int^{\circ}(Yo(\wt{P}))\circ \wt{\mu}_1^{-1}\\
\Pi=\mu_2\circ int^{\circ}(Yo(P))\circ \mu_1^{-1}
\end{CD}
\end{eq}
is a pre-$(\Pi,\lambda)$-structure on $CC({\cal C},p)$.
\end{construction}
\begin{lemma}
\llabel{2017.01.07.l4}
In the context of Construction \ref{2015.03.17.constr3}, if $(P,\wt{P})$ is a $(P,\wt{P})$-structure then the pre-$(\Pi,\lambda)$-structure constructed there is a $(\Pi,\lambda)$-structure.
\end{lemma}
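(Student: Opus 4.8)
The plan is to verify that the outer rectangle of the diagram (\ref{2016.12.09.eq1}) is a pullback. By the definitions (\ref{2016.12.09.eq3}), the top and bottom rows of this rectangle compose to $\lambda$ and $\Pi$ respectively, while the two vertical legs are the maps $\partial$; hence the outer rectangle is exactly the square (\ref{2015.03.09.eq1}) attached to the constructed pair $(\Pi,\lambda)$. So, by Definition \ref{2015.03.09.def1}, showing that this rectangle is a pullback is precisely what the lemma asks for. I would obtain this by exhibiting the rectangle as the horizontal pasting of the three squares of (\ref{2016.12.09.eq1}), checking that each of them is a pullback, and then applying the pasting lemma for pullbacks.

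The two outer squares are immediate. In the left-hand square the parallel horizontal arrows are the isomorphisms $\wt{\mu}_2$ and $\mu_2$, and in the right-hand square they are the isomorphisms $\wt{\mu}_1^{-1}$ and $\mu_1^{-1}$ (all four are isomorphisms by \cite[Sec. 2.6]{presheavesOb}). A commuting square whose two parallel horizontal morphisms are isomorphisms is automatically a pullback, and the commutativity of these two squares was already recorded in Construction \ref{2015.03.17.constr3}. Thus both outer squares are pullbacks with essentially no further work.

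The content of the lemma is carried by the middle square, and this is exactly where the hypothesis is used. Since $(P,\wt{P})$ is a $(P,\wt{P})$-structure, the square (\ref{2009.prod.square}) is a pullback in $\cal C$. The middle square of (\ref{2016.12.09.eq1}) is the image of (\ref{2009.prod.square}) under the composite functor $int^{\circ}\circ Yo$. The Yoneda embedding $Yo$ preserves every limit that exists in $\cal C$, and the functor $int^{\circ}$, being precomposition (recall the convention $\Phi^{\circ}(F)(X)=F(\Phi(X))$), preserves limits because limits of presheaves are computed pointwise. In particular both functors preserve pullbacks, so the middle square is a pullback.

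It then remains to paste. Applying the pasting lemma for pullbacks twice — first to the left and middle squares, and then to the resulting rectangle together with the right-hand square — shows that the outer rectangle is a pullback, which is the square (\ref{2015.03.09.eq1}) for the constructed $(\Pi,\lambda)$. The only step that is more than bookkeeping is the preservation of pullbacks by $Yo$ and by $int^{\circ}$, and I would state this explicitly, since it is the hinge on which the passage from the pullback property of (\ref{2009.prod.square}) to that of (\ref{2015.03.09.eq1}) turns; everything else is the pasting lemma and the elementary observation about squares with invertible horizontal edges.
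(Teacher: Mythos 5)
Your proposal is correct and follows essentially the same route as the paper's own proof: decompose the outer rectangle of (\ref{2016.12.09.eq1}) into the three squares, note that the left and right squares are pullbacks because their parallel horizontal arrows are isomorphisms, deduce that the middle square is a pullback since it is the image of the pullback (\ref{2009.prod.square}) under $Yo$ and then $int^{\circ}$, both of which preserve pullbacks, and conclude by pasting. The only difference is cosmetic: the paper simply declares the two preservation facts as known, whereas you sketch why they hold (Yoneda preserves limits; precomposition preserves pointwise limits of presheaves).
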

\begin{proof}
We need to show that the external square of the diagram (\ref{2016.12.09.eq1}) is a pullback. 

Horizontal composition of pullbacks is a pullback. The left hand side square is a pullback because it is a commutative square with two parallel sides being isomorphisms. The right hand side square is a pullback for the same reason. 

It remains to show that the middle square is pullback. This square is obtained by applying first the functor $Yo$ and then the functor $int^{\circ}$ to the pullback square (\ref{2009.prod.square}). 

Our claim follows now from two facts:
\begin{enumerate}
\item the Yoneda functor $Yo:{\cal C}\sr PreShv({\cal C})$ takes pullbacks to pullbacks,
\item for any functor $F:{\cal C}'\sr {\cal C}$, the functor 
$$F^{\circ}:PreShv({\cal C})\sr PreShv({\cal C}')$$
of pre-composition with $F^{op}$, takes pullbacks to pullbacks.
\end{enumerate}
We assume that these two facts are known.
\end{proof}
There is an important class of cases when the function from $(P,\wt{P})$-structures on $p$ to $(\Pi,\lambda)$-structures on $CC({\cal C},p)$ defined by Construction \ref{2015.03.17.constr3} is a bijection. 
\begin{lemma}
\llabel{2016.09.09.l1}
Let $({\cal C},p)$ be a universe category such that the functor 
$$Yo\circ int^{\circ}:{\cal C}\sr PreShv(CC({\cal C},p))$$
is fully faithful. Then the function from the pre-$(P,\wt{P})$-structures on $p$ to the pre-$(\Pi,\lambda)$-structures on $CC({\cal C},p)$ defined by Construction \ref{2015.03.17.constr3} is a bijection.

Moreover, the restriction of this function to the function from $(P,\wt{P})$-structures to $(\Pi,\lambda)$-structures, which is defined in view of Lemma \ref{2017.01.07.l4},  is a bijection as well.
\end{lemma}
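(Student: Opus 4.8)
The plan is to recognise the assignment of Construction~\ref{2015.03.17.constr3} as the functor $Yo\circ int^{\circ}$ applied to morphisms, conjugated by the isomorphisms $\mu_n,\wt{\mu}_n$, and to deduce both assertions from the hypothesis that $Yo\circ int^{\circ}$ is fully faithful. First I would treat the two components separately and postpone the commutativity conditions. The assignments $\wt{P}\mapsto\lambda$ and $P\mapsto\Pi$ of (\ref{2016.12.09.eq3}) are each obtained from $g\mapsto int^{\circ}(Yo(g))$ by pre- and post-composition with the isomorphisms $\mu_n,\wt{\mu}_n$; since these are isomorphisms, each assignment is a bijection of hom-sets precisely when $g\mapsto int^{\circ}(Yo(g))$ is, that is, precisely when $Yo\circ int^{\circ}$ is full and faithful. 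This yields a bijection between the ambient products $Hom(I_p(\wt{U}),\wt{U})\times Hom(I_p(U),U)$ and $Hom(\wOb_2,\wOb_1)\times Hom(\Ob_2,\Ob_1)$.

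Next I would check that this ambient bijection carries the defining commutativity condition of one side to that of the other. From the commuting left and right squares of (\ref{2016.12.09.eq1}), which commute by \cite[Problem 2.6.8]{presheavesOb}, I extract the identities $\wt{\mu}_1\circ int^{\circ}(Yo(p))=\partial\circ\mu_1$ and $\partial\circ\mu_2=\wt{\mu}_2\circ int^{\circ}(Yo(I_p(p)))$. Substituting the formulas for $\lambda$ and $\Pi$ and using these identities, the equation $\lambda\circ\partial=\partial\circ\Pi$ expressing commutativity of (\ref{2015.03.09.eq1}) rewrites, after cancelling the isomorphisms $\wt{\mu}_2$ and $\mu_1^{-1}$, as $int^{\circ}(Yo(\wt{P}))\circ int^{\circ}(Yo(p))=int^{\circ}(Yo(I_p(p)))\circ int^{\circ}(Yo(P))$, i.e. as $int^{\circ}(Yo(-))$ applied to the two legs of (\ref{2009.prod.square}). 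By faithfulness this holds iff $\wt{P}\circ p=I_p(p)\circ P$, i.e. iff (\ref{2009.prod.square}) commutes. Hence $(\wt{P},P)$ is a pre-$(P,\wt{P})$-structure iff its image $(\lambda,\Pi)$ is a pre-$(\Pi,\lambda)$-structure, and the ambient bijection restricts to the first asserted bijection.

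For the ``moreover'' part I would upgrade ``commutes'' to ``is a pullback''. By the same horizontal-pasting argument used in the proof of Lemma~\ref{2017.01.07.l4} --- the left and right squares of (\ref{2016.12.09.eq1}) being pullbacks with parallel isomorphic sides --- the outer square (\ref{2015.03.09.eq1}) is a pullback iff the middle square is, and the middle square is $int^{\circ}(Yo(-))$ applied to (\ref{2009.prod.square}). Here I invoke the standard fact that a fully faithful functor reflects limits, and in particular reflects pullbacks: applied to the cone (\ref{2009.prod.square}), it gives that this square is a pullback whenever its image is. Together with the preservation of pullbacks already recorded in the proof of Lemma~\ref{2017.01.07.l4}, this shows that (\ref{2009.prod.square}) is a pullback iff (\ref{2015.03.09.eq1}) is, so the bijection restricts to a bijection between $(P,\wt{P})$-structures and $(\Pi,\lambda)$-structures.

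I expect the only genuinely delicate points to be the bookkeeping in the second paragraph --- keeping the diagrammatic composition order straight and transposing the side-square identities correctly --- and the appeal to reflection of pullbacks, which must be applied to the concrete cone (\ref{2009.prod.square}) rather than quoted abstractly.
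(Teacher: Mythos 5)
Your proposal is correct and takes essentially the same route as the paper's own proof: full faithfulness of $Yo\circ int^{\circ}$ gives the hom-set bijections (conjugated by the isomorphisms $\mu_n$, $\wt{\mu}_n$), faithfulness transfers the commutativity condition between (\ref{2009.prod.square}) and (\ref{2015.03.09.eq1}), and preservation plus reflection of pullbacks by the fully faithful functor handles the ``moreover'' part. The only difference is presentational --- the paper builds the inverse map explicitly via the inverses $\alpha$, $\wt{\alpha}$ of the hom-set bijections and checks that it is two-sided, whereas you set up a single ambient bijection on pairs of morphisms and restrict it to the subsets of structures --- but the mathematical content is identical.
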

\begin{proof}
Let 
$$\wt{\alpha}:Mor_{PreShv(CC({\cal C},p))}(int^{\circ}(Yo(I_p(\wt{U}))),int^{\circ}(Yo(\wt{U})))\sr Mor_{\cal C}(I_p(\wt{U}),\wt{U})$$
$${\alpha}:Mor_{PreShv(CC({\cal C},p))}(int^{\circ}(Yo(I_p({U}))),int^{\circ}(Yo({U})))\sr Mor_{\cal C}(I_p({U}),{U})$$
be the inverses to $(Yo\circ int^{\circ})_{I_p(\wt{U}),\wt{U}}$ and $(Yo\circ int^{\circ})_{I_p(U),U}$ respectively. 

Given a pre-$(\Pi,\lambda)$-structure $(\Pi,\lambda)$ let
\begin{eq}
\llabel{2016.09.09.eq1}
\begin{CD}
\wt{P}=\wt{\alpha}(\wt{\mu}_2^{-1}\circ \lambda\circ\wt{\mu}_1)\\
P=\alpha(\mu_2^{-1}\circ \Pi\circ\mu_1)
\end{CD}
\end{eq}
Then $\wt{P}:I_p(\wt{U})\sr \wt{U}$ and $P:I_p(U)\sr U$. Let $S$ be the square that $\wt{P}$ and $P$ form with $I_p(p)$ and $p$. Then the square $(Yo\circ int^{\circ})(S)$ is of the form
\begin{eq}
\llabel{2017.01.07.eq7}
\begin{CD}
int^{\circ}(Yo(I_p(\wt{U}))) @>\wt{\mu}_2^{-1}\circ \lambda\circ \wt{\mu}_1>> int^{\circ}(Yo(\wt{U}))\\
@Vint^{\circ}(Yo(I_p(p))) VV @VV int^{\circ}(Yo(p)) V\\
int^{\circ}(Yo(I_p(U))) @>\mu_2^{-1}\circ \Pi\circ \mu_1>> int^{\circ}(Yo(U))
\end{CD}
\end{eq}
Since the left and right squares of (\ref{2016.12.09.eq1}) commute and their horizontal arrows are isomorphisms, the square $(Yo\circ int^{\circ})(S)$ is isomorphic to the original square formed by $\Pi$ and $\lambda$ and as a square isomorphic to a commutative square is commutative. Since $Yo\circ int^{\circ}$ is faithful, that is, injective on morphisms between a given pair of objects we conclude that $S$ is commutative, that is, $(P,\wt{P})$ defined in (\ref{2016.09.09.eq1}) is a pre-$(P,\wt{P})$-structure. 

One verifies immediately that the function from pre-$(\Pi,\lambda)$-structures to pre-$(P,\wt{P})$-structures that this construction defines is both left and right inverse to the function defined by Construction \ref{2015.03.17.constr3}. 

Assume now that we started with a $(\Pi,\lambda)$-structure. Then the square $(Yo\circ int^{\circ})(S)$ is isomorphic to a pullback and therefore is a pullback. By our assumption, the functor $Yo\circ int^{\circ}$ is fully-faithful. Fully-faithful functors reflect pullbacks, that is, if the image of a square under a fully-faithful functor is a pullback than the original square is a pullback. We conclude that both the direct and the inverse bijections map the subsets of $(P,\wt{P})$-structures and $(\Pi,\lambda)$-structures to each other. Therefore, e.g. by \cite[Lemma 5.1]{fromunivwithPiI}, the restrictions of the total bijections to these subsets are bijections as well. 

The lemma is proved.
\end{proof}
\begin{problem}
\llabel{2016.12.09.prob2}
Let $({\cal C},p)$ be a universe category. 

To construct a function from the set of $(P,\wt{P})$-structures on $p$ to the set of structures of products of families of types on $CC({\cal C},p)$. 

To show that if the functor $Yo\circ int^{\circ}$ is fully faithful than this function is a bijection.
\end{problem}
\begin{construction}\rm
\llabel{2016.12.09.constr2}
The required function is the composition of the function of Construction \ref{2015.03.17.constr3} with the construction for \cite[Problem 4.5]{fromunivwithPiI} described in that paper.
\end{construction}
\begin{remark}\rm
\llabel{2017.01.07.rem1}
One can define a mixed $(P,\wt{P})$-structure (or pre-$(P,\wt{P})$-structure) as follows:
\begin{definition}
\llabel{2009.10.27.def1}
Let $\cal C$ be an lcc category and let $p_i:\wt{U}_i\sr U_i$, $i=1,2,3$ be three morphisms in $\cal C$. A $(P,\wt{P})$-structure on $(p_1,p_2,p_3)$ is a pullback of the form
\begin{eq}
\llabel{Pisq1}
\begin{CD}
I_{p_1}(\wt{U}_2) @>\wt{P}>> \wt{U}_3\\
@VI_{p_1}(p_2)VV @VVp_3V\\
I_{p_1}(U_2) @>P>> U_3
\end{CD}
\end{eq}
\end{definition}
Then a $(P,\wt{P})$-structure on $p$ is a $(P,\wt{P})$-structure on $(p,p,p)$. This concept can be used to construct universes in C-systems that participate in impredicative  $(\Pi,\lambda)$-structures.
\end{remark}

\subsection{From $(P,\wt{P})$- to $(\Pi,\lambda)$-structures -- the functoriality}
\label{Sec.3}

Recall that in \cite[pp. 1067-68]{fromunivwithPiI} we have constructed, for any homomorphism $H:CC\sr CC'$ of C-systems, and any $n\ge 0$, natural transformations
$$H\Ob_n:\Ob_i\sr H^{\circ}(\Ob_i)$$
where for $\Gamma\in CC$ and $T\in \Ob_i(\Gamma)$ one has
$$H\Ob_n(T)=H_{Ob}(T)$$
and
$$H\wOb_n:\wOb_i\sr H^{\circ}(\wOb_i)$$
where for $\Gamma\in CC$ and $o\in \wOb_n(\Gamma)$ one has
$$H\wOb_n(o)=H_{Mor}(o)$$
\begin{definition}
\llabel{2016.09.13.def1}
Let $H:CC\sr CC'$ be a homomorphism of C-systems. Let $(\Pi,\lambda)$ and $(\Pi',\lambda')$ be pre-$(\Pi,\lambda)$-structures on $CC$ and $CC'$ respectively.

Then $H$ is called a $(\Pi,\lambda)$-homomorphism if the following two squares commute
$$
\begin{CD}
\Ob_2 @>\Pi>> \Ob_1\\
@VH\Ob_2 VV @VVH\Ob_1 V\\
H^{\circ}(\Ob_2) @>H^{\circ}(\Pi')>> H^{\circ}(\Ob_1)
\end{CD}
\spc\spc\spc\spc
\begin{CD}
\wOb_2 @>\lambda>> \wOb_1\\
@VH\wOb_2 VV @VVH\wOb_1 V\\
H^{\circ}(\wOb_2) @>H^{\circ}(\lambda')>> H^{\circ}(\wOb_1)
\end{CD}
$$

If $(\Pi,\lambda)$ and $(\Pi',\lambda')$ are $(\Pi,\lambda)$-structures then $H$ is called a $(\Pi,\lambda)$-homomorphism if it is a $(\Pi,\lambda)$-homomorphism with respect to the corresponding pre-$(\Pi,\lambda)$-structures.
\end{definition}
Unfolding the definition of $H\Ob_i$ and $H\wOb_i$ we see that $H$ is a $(\Pi,\lambda)$-homomorphism if and only if for all $\Gamma\in CC$ one has
\begin{enumerate}
\item for all $T\in \Ob_2(\Gamma)$ one has 
\begin{eq}
\llabel{2016.09.13.eq1}
H(\Pi_{\Gamma}(T))=\Pi'_{H(\Gamma)}(H(T))
\end{eq}
\item for all $o\in \wOb_2(\Gamma)$ one has
\begin{eq}
\llabel{2016.09.13.eq2}
H(\lambda_{\Gamma}(o))=\lambda'_{H(\Gamma)}(H(o))
\end{eq}
\end{enumerate}
The morphisms $\xi$ and $\wt{\xi}$ used in the following definition are defined in \cite[Sec. 3.4]{presheavesOb}. 
\begin{definition}
\llabel{2017.01.13.def1}
Let $({\cal C},p)$ and $({\cal C}',p')$ be universe categories with locally cartesian closed and binary product structures and let $(P,\wt{P})$, $(P',\wt{P}')$ be pre-$(P,\wt{P})$-structures on $p$ and $p'$ respectively. 

A universe category functor ${\bf\Phi}=(\Phi,\phi,\wt{\phi})$ is said to be a pre-$(P,\wt{P})$-functor relative to the structures $(P,\wt{P})$ and $(P',\wt{P}')$ if the squares
\begin{eq}\llabel{2015.03.23.sq1}
\begin{CD}
\Phi(I_p(U)) @>\Phi(P) >>  \Phi(U)\\
@V\xi_{{\bf\Phi},1}VV @VV \phi V\\
I_{p'}(U') @>P'>> U'
\end{CD}
\spc\spc\spc
\begin{CD}
\Phi(I_p(\wt{U})) @>\Phi(\wt{P}) >>  \Phi(\wt{U}) \\
@V\wt{\xi}_{{\bf\Phi},1}VV @VV\wt{\phi} V\\
I_{p'}(\wt{U}')@>\wt{P}' >> \wt{U}'
\end{CD}
\end{eq}
commute. 

If $(P,\wt{P})$ and $(P',\wt{P}')$ are $(P,\wt{P})$-structures then $\bf\Phi$ satisfying the above condition is called a $(P,\wt{P})$-functor.
\end{definition}
\begin{theorem}
\llabel{2015.03.21.th1}
Let $({\cal C},p)$ and $({\cal C}',p')$ be universe categories with locally cartesian closed and binary product structures. Let let $(P,\wt{P})$ and $(P',\wt{P}')$ be pre-$(P,\wt{P})$-structures on $p$ and $p'$ respectively.

If ${\bf\Phi}=(\Phi,\phi,\wt{\phi})$ is a pre-$(P,\wt{P})$-universe category functor relative to $(P,\wt{P})$ and $(P',\wt{P}')$, then the homomorphism 
$$H(\Phi,\phi,\wt{\phi}):CC({\cal C},p)\sr CC ({\cal C}',p')$$
is a homomorphism of C-systems with pre-$(\Pi,\lambda)$-structures relative to the structures obtained from $(P,\wt{P})$ and $(P',\wt{P}')$ by Construction \ref{2015.03.17.constr3}.
\end{theorem}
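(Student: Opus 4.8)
The plan is to verify directly that $H=H(\Phi,\phi,\wt{\phi})$ satisfies the two commutativity conditions of Definition \ref{2016.09.13.def1}, i.e. that the $\Pi$-square and the $\lambda$-square commute for the pre-$(\Pi,\lambda)$-structures produced by Construction \ref{2015.03.17.constr3}. Since the $\lambda$-square is obtained from the $\Pi$-square by decorating every object and morphism with a tilde and replacing $P$ by $\wt{P}$, $\mu_n$ by $\wt{\mu}_n$, $\xi$ by $\wt{\xi}$, and $\phi$ by $\wt{\phi}$, the two cases are formally identical; so I would carry out the $\Pi$-case and then invoke symmetry for the $\lambda$-case.

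First I would substitute the explicit descriptions $\Pi=\mu_2\circ int^{\circ}(Yo(P))\circ\mu_1^{-1}$ and $\Pi'=\mu_2'\circ int^{\circ}(Yo(P'))\circ(\mu_1')^{-1}$ from (\ref{2016.12.09.eq3}) into the $\Pi$-square of Definition \ref{2016.09.13.def1}. This replaces the single square by a diagram whose top and bottom edges are the three-fold composites factoring through $int^{\circ}(Yo(I_p(U)))$ and $int^{\circ}(Yo(U))$ on the $CC({\cal C},p)$ side and through their $H^{\circ}$-images of the primed objects on the $CC({\cal C}',p')$ side. The strategy is then to insert the corresponding interpolating objects and to factor the large square as a horizontal pasting of three smaller squares, matching the three horizontal stages of diagram (\ref{2016.12.09.eq1}).

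The two outer squares are the comparison squares for the isomorphisms $\mu_1$ and $\mu_2$: their commutativity expresses the compatibility of the family $\mu_n$ with the homomorphism $H(\Phi,\phi,\wt{\phi})$ and with the transition morphisms $\xi_{{\bf\Phi},1}$, which is precisely what is recorded in \cite[Sec. 2.6 and 3.4]{presheavesOb}. The middle square is where the hypothesis enters: it is the image under the functor $int^{\circ}\circ Yo$ of the left-hand square of (\ref{2015.03.23.sq1}), read through the natural identification between $H^{\circ}\circ int^{\circ}\circ Yo$ and $int^{\circ}\circ Yo\circ\Phi$ in which the morphism $\xi_{{\bf\Phi},1}$ lives. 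Since $int^{\circ}$ and $Yo$ are functors, they preserve the commutativity guaranteed by the pre-$(P,\wt{P})$-functor condition, so the middle square commutes as well; pasting the three commuting squares horizontally yields the required commutativity of the $\Pi$-square.

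The main obstacle I anticipate is bookkeeping rather than conceptual. One must pin down the precise natural isomorphism comparing $H^{\circ}\circ int^{\circ}$ with $int^{\circ}\circ\Phi$ — equivalently, the exact way $\xi_{{\bf\Phi},1}$ and $\phi$ mediate between the two universes after $Yo$ and $int^{\circ}$ are applied — and check that it is the \emph{same} identification that underlies the definition of $\Pi'$ and the compatibility statements for $\mu_n$ quoted from \cite{presheavesOb}. Once the relevant squares of \cite[Sec. 2.6 and 3.4]{presheavesOb} are invoked in the correct form, the three-fold horizontal pasting is routine, which is why, modulo those results, the proof is short and straightforward.
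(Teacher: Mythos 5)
Your proposal is correct and matches the paper's proof in substance: the paper verifies the same two squares by factoring along the three-stage formula for $\Pi$ from Construction \ref{2015.03.17.constr3}, using the compatibility equations for $\mu_n$, $\xi$ and $\phi$ from \cite[Sec. 3.4]{presheavesOb} (Eqs.\ 3.41, 3.43, 3.45) for the outer stages and the commutativity of (\ref{2015.03.23.sq1}) for the middle stage, then handles $\lambda$ by the strictly parallel argument. The only difference is presentational: the paper carries out the verification element-wise, evaluating at $T\in\Ob_2(\Gamma)$ and chaining six equalities, rather than pasting squares of presheaf morphisms, which sidesteps the need to assemble the quoted equations into commuting squares of natural transformations --- exactly the bookkeeping your last paragraph flags as the remaining work.
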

\begin{proof}
We have to show that for all $\Gamma\in Ob(CC({\cal C},p))$, $T\in Ob_2(\Gamma)$  and $o\in \wOb_2(\Gamma)$ the equalities (\ref{2016.09.13.eq1}) and (\ref{2016.09.13.eq2}) hold. We will prove the first equality. The proof of the second one is strictly parallel to the proof of the first.
We have
$$H(\Pi(T))=
$$$$H(\mu_1^{-1}(\mu_2(T)\circ P))=
\mu_1^{-1}(\psi(\Gamma)\circ \Phi(\mu_2(T)\circ P)\circ \xi_0)=
$$$$\mu_1^{-1}(\psi(\Gamma)\circ \Phi(\mu_2(T))\circ \Phi(P)\circ \phi)=
\mu_1^{-1}(\psi(\Gamma)\circ \Phi(\mu_2(T))\circ \xi_{1}\circ P')=
\mu_1^{-1}(\mu_2(H(T))\circ P')=
$$$$\Pi'(H(T))$$
where the first equality holds by the definition of $\Pi$, the second by \cite[Eq. 3.45]{presheavesOb}, the third by the composition axiom of functor $\Phi$ and \cite[Eq. 3.41]{presheavesOb}, the fourth by the commutativity of (\ref{2015.03.23.sq1}), the fifth  by \cite[Eq. 3.43]{presheavesOb}, and the sixth one by the definition of $\Pi'$. 
\end{proof}

\subsection{Acknowledgements}
\label{Sec.4}

I am grateful to the Department of Computer Science and Engineering of the University of Gothenburg and Chalmers University of Technology for its the hospitality during my work on the first version of the paper.  

Work on this paper was supported by NSF grant 1100938.

This material is based on research sponsored by The United States Air Force Research Laboratory under agreement number FA9550-15-1-0053. The US Government is authorized to reproduce and distribute reprints for Governmental purposes notwithstanding  any copyright notation thereon.

The views and conclusions contained herein are those of the author and should not be interpreted as necessarily representing the official policies or endorsements, either expressed or implied, of the United States Air Force Research Laboratory, the U.S. Government or Carnegie Mellon University.


\def\cprime{$'$}

\end{document}